\documentclass[10pt,a4paper]{article}

\usepackage{amsmath}
\usepackage[authoryear]{natbib}
\usepackage{microtype}
\usepackage[bitstream-charter]{mathdesign}

\usepackage{oldgerm}
\DeclareFontFamily{U}{yswab}{}	
\DeclareFontShape{U}{yswab}{m}{n}{<-> yswab}{}

\usepackage{bm}
\usepackage[final]{showkeys}

\usepackage{amsfonts}
\usepackage{dsfont}
\usepackage{color}
\usepackage{xcolor}

\usepackage{amsmath}
\usepackage{amsthm}
\usepackage{color}
\usepackage{xcolor}
\theoremstyle{definition}

\newtheorem{theorem}{Theorem}
\newtheorem{proposition}[theorem]{Proposition}%
\newtheorem{lemma}{Lemma}

\newtheorem{corollary}{Corollary}%
\newtheorem{remark}{Remark}%

\numberwithin{equation}{section}

\newcommand{\sd}{\mathrm{d}}
\newcommand{\e}{\mathrm{e}}
\newcommand{\ci}{\mathrm{i}}
\newcommand\xqed[1]{%
  \leavevmode\unskip\penalty9999 \hbox{}\nobreak\hfill
  \quad\hbox{#1}}
\newcommand\triag{\xqed{$\triangle$}}

\title{The Fractional Non-homogeneous Poisson Process}

\author{$\text{Nikolai Leonenko}^1$ \& $\text{Enrico Scalas}^2$ \& $\text{Mailan Trinh}^2$\\
	\footnotesize{${}^1$Cardiff University, School of Mathematics, Senghennydd Road, Cardiff, CF24 4AG, UK}\\
	\footnotesize{${}^2$Department of Mathematics, School of Mathematical and Physical Sciences}\\
	\footnotesize{University of Sussex, Brighton, BN1 9QH, UK}}

\begin{document}

	\maketitle
	
	\begin{abstract}

		We introduce a non-homogeneous fractional Poisson process by replacing the time variable in the fractional Poisson process of renewal type with an appropriate function of time. We characterize the resulting process by deriving its non-local governing equation. We further compute the first and second moments of the process. Eventually, we derive the distribution of arrival times. Constant reference is made to previous known results in the homogeneous case and to how they can be derived from the specialization of the non-homogeneous process.

		\medskip		
		
		\textit{Keywords} Fractional point processes; L\'evy processes; Time-change; Subordination.

	\end{abstract}

\section{Introduction}
\label{sec:introduction}

There are several different approaches to the concept of fractional (homogeneous) Poisson process. The {\em renewal} approach consists in generalizing the characterization of the Poisson process as counting process related to the sum of independent and identically distributed (i.i.d.) non-negative random variables, where, instead of assuming that these random variables follow the exponential distribution, one assumes that they have the Mittag-Leffler distribution. More explicitly if $\{ J_i \}_{i=1}^\infty$ is a sequence of i.i.d. non-negative random variables (with the meaning of inter-event durations), one can define the {\em epochs}
\begin{equation}
T_n = \sum_{i=1}^n J_i,
\end{equation}
and the counting process
\begin{equation}
N(t) = \mathrm{max} \{n: T_n \leq t\}.
\end{equation}
In this renewal context, for $\alpha \in (0,1]$, if one chooses
\begin{equation}
F_J (u) = \mathbb{P} \{J \leq u\} = 1 - E_\alpha (-t^{\alpha}), 
\end{equation}
where $E_\alpha (z)$ is the one-parameter Mittag-Leffler function defined as
\begin{equation}
E_\alpha (z) = \sum_{n=0}^\infty \frac{z^n}{\Gamma(\alpha n + 1)},
\end{equation}
one can define the counting process $N (t)$ as a fractional homogeneous Poisson process. This distribution was used in the framework of queuing theory in \cite{Gnedenko_1968}, where it was not explicitly recognized that it is the limiting distribution of thinning iterations starting from a power-law distribution of $\{ J_i \}_{i=1}^\infty$, and in \cite{Khintchine_1969}.
The renewal approach fully characterizes a specific process and was used in \citep{Mainardi_2004}.
The process defined above was studied in \cite{Beghin_Orsingher_2009, Beghin_Orsingher_2010}, \cite{Meerschaert_2011}, \cite{Politi_2011} among the others. In particular, \cite{Beghin_Orsingher_2009, Beghin_Orsingher_2010} developed the renewal approach to FHPP and proved that its one-dimensional distributions coincide with the solution to the fractional Poisson differential-difference equations. 

Indeed, one can show that the counting probabilities $f_x (t) = \mathbb{P}\{ N (t) = x \}$ obey the following governing equation
\begin{equation}
D_t^\alpha f_x(t) =- f_x(t) +  f_{x-1}(t),
\end{equation}
with appropriate initial conditions, where the operator $D_t^\alpha$ is the fractional Caputo-Djrbashian derivative defined below. The governing equation approach was originally developed in \cite{Repin_2000} and \cite{Laskin_2003}. Parameter estimation for the fractional Poisson distribution is studied in \cite{Cahoy_2010}. This approach does not uniquely define a stochastic process, however, as explicitly shown in \cite{Beghin_Orsingher_2009}. In fact, the solutions of the governing equation only give the one-point counting distribution and nothing is said on all the other finite-dimensional distributions.

A third approach to FHPP is using the {\em inverse subordinator} as in \cite{Meerschaert_2011}. It can be shown that a classic Poisson process coincides in law with an FHPP, in which the time variable is replaced by an independent inverse stable subordinator. This result unifies the two main approaches discussed above.

For the sake of completeness, we must mention a further approach to fractional Poisson processes. This consists in replacing the Gaussian measure in the definition of fractional Brownian motion with the Poisson counting measure. This integral representation method was developed by 
\cite{Wang_Wen_2003}, \cite{Wang_2006} and \cite{Wang_2007}. For other aspects of this approach, the reader is referred to
\cite{Bierme_2013} and \cite{Molchanov_2015b,Molchanov_2015a}.

In this paper, we introduce a fractional non-homogeneous Poisson process (FNPP) following the approach of replacing the time variable in a Poisson counting process $N (t)$ with an appropriate function of time $\Lambda (t)$ in order to get the non-homogeneous process $N (\Lambda (t))$ and further replacing time with $Y_\alpha (t)$, the inverse stable subordinator,  as specified in the following. In other words, we discuss a time-transformed non-homogeneous Poisson process.

\section{Definition and marginal distributions}
\label{sec:defin-marg-distr}

Let $L_\alpha = \{L_\alpha(t), t\geq 0\}$, be an $\alpha$-stable subordinator with Laplace exponent $\phi(s) = s^\alpha$, $0<\alpha<1$, $s\geq 0$, that is $\log\left( \mathbb{E}\left[ \exp(-sL_\alpha(t))\right]\right) = -t\phi(s)$.

Then the inverse stable subordinator \citep[see e.g.][]{Bingham_1971}
\begin{equation}
  \label{eq:1}
  Y_\alpha(t) = \inf\{u\geq 0: L_\alpha(u) > t\}
\end{equation}
has density \citep[see e.g.][]{Meerschaert_2013, Leonenko_2015}
\begin{equation}
  \label{eq:2}
  h_\alpha(t,x) = \frac{t}{\alpha x^{1+\frac{1}{\alpha}}} g_\alpha\left( \frac{t}{x^{\frac{1}{\alpha}}}\right), \quad x\geq 0, t\geq 0,
\end{equation}
where the density of $L_\alpha(1)$ is
\begin{align}
  g_\alpha(z) &= \frac{1}{\pi} \sum_{k=1}^{\infty}(-1)^{k+1}\frac{\Gamma(\alpha k+1)}{k!} \frac{1}{z^{\alpha k +1}} \sin(\pi k \alpha)\nonumber\\
  &=\frac{1}{x}W_{-\alpha,0}\left(-\frac{1}{z^\alpha}\right), z>0.
\end{align}
Here, we use Wright's generalized Bessel function
\begin{equation*}
  W_{\gamma,\beta}(z) = \sum_{k=0}^{\infty}\frac{z^k}{\Gamma(1+k)\Gamma(\beta+\gamma k)},\quad  z \in \mathbb{C}, \gamma > -1, b\in \mathbb{R}
\end{equation*}

Let $N=\{N(t), t\geq 0\}$ be a non-homogeneous Poisson process (NPP) with intensity function $\lambda(t): [0,\infty) \longrightarrow [0,\infty)$.  We denote for $0\leq s < t$
\begin{equation*}
  \Lambda(s,t) = \int_s^t\lambda(u) \sd u,
\end{equation*}
where the function $\Lambda(t) = \Lambda(0,t)$ is known as rate function or cumulative rate function. Thus, $N$ is a stochastic process with independent, but not necessarily stationary increments. Let $0 \leq v < t$. Then the Poissonian marginal distributions of $N$
\begin{align}
  p_x(t,v) &= \mathbb{P}\{N(t+v) - N(v) = x\} \nonumber\\
  &=\frac{\e^{-(\Lambda(t+v) - \Lambda(v))} (\Lambda(t+v) - \Lambda(v))^x}{x!}\nonumber\\
  &=\frac{\e^{-\Lambda(v,t+v)} \Lambda(v,t+v)^x}{x!}, \quad x=0,1,2, \ldots, \label{eq:14}
\end{align}
satisfy the following differential-difference equations:
\begin{equation}
  \label{eq:5}
  \frac{\sd}{\sd t} p_x(t,v) = -\lambda(t+v) p_x(t,v) + \lambda(t+v)p_{x-1}(t,v), \quad x=0,1,2, \ldots,
\end{equation}
with initial conditions
\begin{equation}
  \label{eq:6}
  p_x(0,v) = \left\{
    \begin{array}{ll}
      1, &x=0\\
      0, &x\geq 1
    \end{array}
\right.
\end{equation}
and $p_{-1}(t,v) \equiv 0$. For notational convenience define $p_x(t) := p_x(t,0)$.

If $\lambda(t) = \lambda > 0$ is a constant, we obtain the homogeneous Poisson process (HPP) with intensity $\lambda>0$. We denote this process $N_\lambda(t), t\geq 0$. Observe that
\begin{equation}
  \label{eq:7}
  N(t) = N_1(\Lambda(t)), \quad t\geq 0.
\end{equation}
\begin{remark}
  It can be verified that the FHPP belongs to the class of Cox processes (or doubly stochastic Poisson processes or mixed Poisson processes). This can be done by using a results in \citep{Yannaros_1994} which we will state here for readers' convenience:
  \begin{lemma}
\label{sec:defin-marg-distr-2}
    An ordinary renewal process whose interarrival distribution function $F_J$ satisfies
    \begin{equation}
      \label{eq:23}
      F_J (t) = 1 - \int_0^\infty \e^{-tx} \sd V(x),
    \end{equation}
    where $V$ is a proper distribution function with $V(0)=0$ is a Cox process.
  \end{lemma}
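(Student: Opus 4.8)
The plan is to combine two classical facts: the Laplace--Stieltjes form of $F_J$ forces the interarrival law of the renewal process to be a mixture of exponentials, and Mecke's characterisation states that a point process is a Cox process if and only if, for every $p\in(0,1)$, it is the $p$-thinning of some point process. Accordingly it will suffice to exhibit, for each $p\in(0,1)$, a renewal process whose $p$-thinning has the same law as $N$.

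First I would unpack the hypothesis. Since $\bar F_J(t)=\int_0^\infty \e^{-tx}\sd V(x)$, the survival function is completely monotone; equivalently $J\stackrel{d}{=}E/\Theta$ with $E$ a unit exponential and $\Theta\sim V$ independent (the condition $V(0)=0$ makes this well defined), and the Laplace--Stieltjes transform $\widetilde F_J(s)=\mathbb E[\e^{-sJ}]=\int_0^\infty \tfrac{x}{s+x}\sd V(x)$ is a strictly positive, bounded Stieltjes function of $s\ge 0$. Next, fix $p\in(0,1)$ and recall that the $p$-thinning of a renewal process is again a renewal process, each of whose interarrival times is the sum of a geometric$(p)$-distributed (on $\{1,2,\dots\}$) number of independent copies of the original interarrival time; hence if the original interarrival transform is $\widetilde G$, that of the thinned process is $p\widetilde G/(1-(1-p)\widetilde G)$. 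To realise $N$ as such a $p$-thinning I therefore need a probability law $G$ on $[0,\infty)$ whose transform solves $p\widetilde G/(1-(1-p)\widetilde G)=\widetilde F_J$, that is,
\begin{equation*}
  \widetilde G(s)=\frac{\widetilde F_J(s)}{\,p+(1-p)\widetilde F_J(s)\,}=\Bigl(\,(1-p)+\tfrac{p}{\widetilde F_J(s)}\,\Bigr)^{-1}.
\end{equation*}

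The crux, and the step I expect to be the main obstacle, is to check that this $\widetilde G$ really is the transform of a probability distribution; this is precisely where complete monotonicity of $\bar F_J$ is used. Since $\widetilde F_J$ is a strictly positive Stieltjes function, $1/\widetilde F_J$ is a complete Bernstein function, and adding the nonnegative constant $1-p$ keeps it in that class, so $\widetilde G$ is the reciprocal of a complete Bernstein function and hence itself a Stieltjes function---in particular completely monotone, with $\widetilde G(0+)=1$. By the Hausdorff--Bernstein--Widder theorem, $\widetilde G$ is then the Laplace transform of a probability measure $G$ (which turns out to be another mixture of exponentials, as one would expect). Letting $N^{(p)}$ be the renewal process with interarrival distribution $G$, its $p$-thinning is a renewal process with interarrival transform $\widetilde F_J$, hence has the same law as $N$; as $p\in(0,1)$ was arbitrary, Mecke's theorem yields that $N$ is a Cox process.

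If one prefers to avoid the Stieltjes / complete Bernstein function calculus, the required positivity can instead be obtained directly when $V$ is finitely supported---the transforms are then rational and one need only confirm that the partial-fraction weights of $\widetilde G$ are nonnegative, much as in the analysis of an interrupted Poisson process---with the general case following by a weak-approximation argument for point processes. That route works as well, but it is longer and less transparent than the functional-analytic one above, so I would carry out the argument as sketched.
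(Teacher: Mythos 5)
Your argument is correct, but it follows a genuinely different route from the one the paper points to. The paper does not prove this lemma at all: it quotes it from \citet{Yannaros_1994} and notes that the proof there rests on Kingman's Laplace-transform characterization (Lemma \ref{sec:defin-marg-distr-1}), i.e.\ one verifies that $\hat F_J(s)=\int_0^\infty \frac{x}{s+x}\,\sd V(x)$ can be written as $1/(1-\ln \hat G(s))$ with $\hat G(s)=\exp\bigl(1-1/\hat F_J(s)\bigr)$ infinitely divisible, which reduces to showing that $1/\hat F_J-1$ is a (complete) Bernstein function vanishing at $0$. You instead invoke Mecke's thinning characterization and, for each $p\in(0,1)$, solve the thinning equation $p\widetilde G/(1-(1-p)\widetilde G)=\widetilde F_J$ for $\widetilde G=\bigl((1-p)+p/\widetilde F_J\bigr)^{-1}$, then check that this is again a Stieltjes (hence completely monotone) function with value $1$ at $0^+$. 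The two routes are close cousins: both hinge on exactly the same analytic fact, namely that $\widetilde F_J$ being a Stieltjes function makes $1/\widetilde F_J$ a complete Bernstein function, and the closure properties of that class do the rest. The Kingman route is a single verification and is the one that meshes with the rest of the paper's remark (where the FHPP case is checked against Lemma \ref{sec:defin-marg-distr-1} directly via $\hat F_J(s)=1/(1+s^\alpha)$); your Mecke route needs a whole family of auxiliary renewal processes indexed by $p$, but it has the advantage of making the probabilistic content (mixtures of exponentials are infinitely thinnable) more transparent and of not presupposing Kingman's theorem. Your closing remark about reducing to finitely supported $V$ plus weak approximation is dispensable given the complete-Bernstein argument, and as stated it is the least rigorous part of the sketch; I would drop it or keep only the functional-analytic proof.
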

The proof of this result uses a lemma due to \citet{Kingman_1964}, which is formulated for the Laplace transform of $F_J$:
\begin{lemma}
\label{sec:defin-marg-distr-1}
  An ordinary renewal process with interarrival distribution function $F_J$ is a Cox process if and only if the Laplace transform $\hat{F}_J$ of $F_J$ satisfies
  \begin{equation}
    \label{eq:25}
    \hat{F}_J (s) = \frac{1}{1 - \ln(\hat{G}(s))},
  \end{equation}
  where $\hat{G}$ is the Laplace transform of an infinitely divisible distribution function $G$.
\end{lemma}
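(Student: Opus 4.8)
My plan is to prove the two implications separately, after first rewriting the hypothesis in a more transparent form. Writing $\hat G(s) = \e^{-\eta(s)}$, the fact that $G$ is an infinitely divisible probability distribution on $[0,\infty)$ is equivalent, by the L\'evy--Khintchine representation for subordinators, to $\eta$ being a Bernstein function with $\eta(0) = -\ln\hat G(0) = 0$, i.e.\ $\eta$ is the Laplace exponent of some subordinator $\sigma$. Hence the stated condition $\hat F_J(s) = 1/(1-\ln\hat G(s))$ is the same as requiring $\hat F_J(s) = 1/(1+\eta(s))$ for a Bernstein function $\eta$ vanishing at the origin, equivalently that $\eta(s) := (1-\hat F_J(s))/\hat F_J(s)$ be such a Bernstein function. (In the fractional Poisson case $\hat F_J(s) = 1/(1+s^\alpha)$, so $\eta(s) = s^\alpha$ and $G$ is the one-sided $\alpha$-stable law.) I shall establish this reformulated statement.

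\emph{Sufficiency.} Assume $\hat F_J(s) = 1/(1+\eta(s))$ with $\eta$ as above. Let $\sigma=\{\sigma(u),u\ge 0\}$ be the subordinator with Laplace exponent $\eta$, and let $\Pi=\{U_1<U_2<\cdots\}$ be an independent homogeneous Poisson process of rate $1$ on $[0,\infty)$. Consider the point process $N$ with epochs $T_n = \sigma(U_n)$, $n\ge 1$. Conditionally on $\sigma$, $N$ is the image of $\Pi$ under the (now deterministic) map $\sigma$, hence a Poisson process whose intensity measure is the push-forward of Lebesgue measure under $\sigma$, namely the random measure $\sd Y$ with $Y(t) = \inf\{u\ge 0 : \sigma(u)>t\}$ the inverse subordinator; thus $N$ is a Cox process directed by $\sd Y$. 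On the other hand $N$ is the ordinary renewal process with interarrival Laplace transform $\hat F_J$: since $\sigma$ has stationary independent increments and is independent of $\Pi$, and the gaps $U_1, U_2-U_1,\dots$ are i.i.d.\ $\mathrm{Exp}(1)$, the differences $\sigma(U_1),\sigma(U_2)-\sigma(U_1),\dots$ are i.i.d.\ with the common law of $\sigma(E)$, $E\sim\mathrm{Exp}(1)$ independent of $\sigma$, and $\mathbb{E}[\e^{-s\sigma(E)}] = \mathbb{E}[\e^{-E\eta(s)}] = 1/(1+\eta(s)) = \hat F_J(s)$. Because the law of an ordinary renewal process is determined by $F_J$, any such renewal process coincides in law with $N$ and is therefore a Cox process. (When $F_J(0)=0$, equivalently $\eta(\infty)=\infty$, the subordinator $\sigma$ is a.s.\ strictly increasing, so $N$ is a genuine simple renewal process.)

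\emph{Necessity.} Conversely, let $N$ be an ordinary renewal process with interarrival law $F_J$ that is also a Cox process directed by a random measure $M$, and set $A(t) = M((0,t])$; since $N$ is simple $M$ is a.s.\ atomless, and since $N$ has infinitely many points $A(\infty)=\infty$ a.s. Comparing void probabilities gives $\bar F_J(t) = \mathbb{P}\{N((0,t])=0\} = \mathbb{E}[\e^{-A(t)}]$, whose Laplace transform in $t$ is $(1-\hat F_J(s))/s$. The crux is to show that $A$ is, up to a version, the occupation measure $Y$ of some subordinator $\sigma$; granting this, $\mathbb{E}[\e^{-A(t)}]=\mathbb{E}[\e^{-Y(t)}]$, and comparing with the well-known double Laplace transform $\int_0^\infty \e^{-st}\mathbb{E}[\e^{-wY(t)}]\,\sd t = \eta(s)/(s(\eta(s)+w))$ at $w=1$ forces $\hat F_J(s)=1/(1+\eta(s))$ with $\eta$ the Laplace exponent of $\sigma$ --- a Bernstein function, vanishing at $0$ since $\hat F_J(0)=1$ --- so that $\hat G:=\e^{-\eta}$ is the Laplace transform of a proper infinitely divisible distribution $G$ on $[0,\infty)$, which is the assertion. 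To identify $A$ with an inverse subordinator I would use the renewal (regenerative) structure. Under the Palm measure of $N$ at a point $x$, the shifted process $N(x+\cdot)-N(x)$ is a fresh copy of $N$, independent of $N|_{[0,x]}$; since the law of the directing measure of a Cox process is uniquely determined by the law of the process, this transfers to $M$: the Palm-at-$x$ law of $M(x+\cdot)$ is the law of $M$, and --- with a little more care, or after passing to a suitable version --- is independent of $M|_{[0,x]}$, so that $A$ regenerates at the epochs $T_1<T_2<\cdots$. On the other hand, conditionally on $M$, the push-forward of $N$ (which is Poisson with intensity $\sd A$) under the continuous increasing map $A$ is a rate-$1$ Poisson process, so the levels $A(T_1)<A(T_2)<\cdots$ form a rate-$1$ Poisson process on $[0,\infty)$. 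Thus $A$, observed at the points of an independent rate-$1$ Poisson process on its own level axis, regenerates, and a de Finetti / L\'evy--It\^o type argument then yields that $\tau := A^{-1}$ has stationary independent increments, i.e.\ is a subordinator, whence $A=Y$. (Equivalently one can argue via the characterisation of Cox processes by complete monotonicity of their generating functional, applied to the renewal pgfl; this is, in essence, Kingman's route.)

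The step I expect to be the main obstacle is precisely this structural identification in the necessity direction. Its delicate points are: (i) making the Palm calculus for the Cox process rigorous, given the measure-theoretic subtlety of ``a Poisson point falls exactly at the renewal epoch $T_n$'' when $M$ is diffuse; (ii) upgrading equality in law to genuine independence of the pre-$x$ and post-$x$ parts of $M$ under the Palm measure, which may require a careful choice of version of $M$; and (iii) the implication ``an increasing process that regenerates when observed at a Poisson process of its own levels is a subordinator'', which I would attack by first showing the excised pieces are i.i.d.\ and then combining exchangeability with the renewal regeneration to extract stationary independent increments. The sufficiency direction, by contrast, needs nothing beyond the explicit time-changed construction above together with the uniqueness of the renewal law from $F_J$.
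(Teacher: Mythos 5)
First, a point of reference: the paper offers no proof of this lemma at all --- it is stated as a quoted result from \citet{Kingman_1964} --- so there is no in-paper argument to compare against; the relevant benchmark is Kingman's original proof. Your \emph{sufficiency} half is complete and correct, and is essentially the classical construction: the reformulation $\hat F_J=1/(1+\eta)$ with $\eta$ a Bernstein function vanishing at $0$, the time-change $T_n=\sigma(U_n)$, the identification of the conditional law given $\sigma$ as Poisson with intensity $\sd Y$ for $Y$ the inverse of $\sigma$, the computation $\mathbb{E}[\e^{-s\sigma(E)}]=1/(1+\eta(s))$, and the observation that being Cox is a property of the law and hence transfers to any renewal process with the same $F_J$.

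The \emph{necessity} half, however, contains a genuine gap, and you have correctly located it yourself. Everything rests on the claim that the directing measure $A$ of a renewal Cox process must be a version of an inverse subordinator, and your route to it --- Palm regeneration of $M$ at the epochs $T_n$, combined with the principle that an increasing process which regenerates at the points of a rate-one Poisson process on its own level axis has a subordinator as its inverse --- is only sketched. The critical unproved step is your (ii): the renewal property gives independence of the shifted point process from $N$ restricted to $[0,T_n]$, but transferring this to independence of the shifted directing measure from $M$ restricted to $[0,T_n]$ is not automatic, because the split occurs at a random time determined by $N$; the pair of restricted point processes is then not obviously a Cox pair directed by the corresponding restricted measures, so the uniqueness theorem for directing measures cannot be invoked directly. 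Your step (iii) is likewise asserted rather than proved. As written, the ``only if'' direction is a programme, not a proof. Kingman's actual argument sidesteps this structural identification entirely: he works analytically with the joint probability generating functional of the renewal process over several disjoint intervals, which is explicit in terms of $\hat F_J$ after a Laplace transform in time, and imposes the complete monotonicity characterizing Cox processes to force $(1-\hat F_J(s))/\hat F_J(s)$ to be a Bernstein function. You mention this route only in a closing parenthesis; if you want a self-contained proof of necessity, that is the argument to carry out.
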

Both lemmata are powerful tools to check whether a renewal process also belongs to the class of Cox processes. Especially, Lemma \ref{sec:defin-marg-distr-1} gives a full characterization of renewal Cox processes via their Laplace transform. In the case of the FHPP the conditions of both lemmata can be verified. We assume $\lambda=1$ in this remark. To this end, as mentioned in the introduction, recall that the interarrival times $J$ of the FHPP can be expressed by the one-parameter Mittag-Leffler function \citep[see][]{Politi_2011}:
\begin{equation*}
  F_J(t) = 1- E_\alpha(-t^\alpha).
\end{equation*}
Moreover, it can be found in \cite{Mainardi_Gorenflo_2000} that
\begin{gather*}
  \int_0^\infty \e^{-rt} K_\alpha(r) \sd r = E_\alpha(-t^\alpha), \quad \text{where } K_\alpha(r) = \frac{1}{\pi}\frac{r^{\alpha-1}\sin(\alpha\pi)}{r^{2\alpha} + 2r^\alpha \cos(\alpha \pi) +1}.
\end{gather*}
For $0<\alpha<1$ the function $K_\alpha(r)$ is positive and qualifies as a probability density as $\int_0^\infty K_\alpha(r) \sd r =1$. Therefore, the function $V(x) := \int_0^x K_\alpha(r) \sd r$ fulfills the conditions of Lemma \ref{sec:defin-marg-distr-2}.

Alternatively, it is also possible to apply Lemma \ref{sec:defin-marg-distr-1}: In \cite{Meerschaert_2011} it is proven that
\begin{equation*}
  \hat{F}_J(s) = \frac{1}{1+s^\alpha} = \frac{1}{1 - \ln(\exp(-s^\alpha))}.
\end{equation*}
As $\exp(-s^\alpha)$ is the characteristic function of the distribution of the $\alpha$-stable subordinator  at time $t=1$, Lemma \ref{sec:defin-marg-distr-1} may be applied. \triag
\end{remark}
We define the FNPP as
\begin{equation}
  \label{eq:8}
  N_{\alpha}(t) = N_1(\Lambda(Y_\alpha(t))), \quad t\geq 0, 0<\alpha<1, 
\end{equation}
where $Y_\alpha$ is the inverse stable subordinator independent of the HPP $N_1$.
It follows that for $\lambda(t)=\lambda > 0$, the FNPP coincides with the FHPP discussed by \cite{Meerschaert_2011}. In this case the marginal probabilities
\begin{align}
  p_x^\alpha(t) &= \mathbb{P}\{N_\lambda(Y_\alpha(t))=x\}\nonumber\\
  &= \int_0^\infty\e^{-\lambda u}\frac{(\lambda u)^x}{x!}h_\alpha(t,u) \sd u = (\lambda t^\alpha)^xE_{\alpha,\alpha x+1}^{x+1}(-\lambda t^\alpha), \label{eq:9}
\end{align}
where the three-parameter generalized Mittag-Leffler function is defined as follows 
\begin{equation*}  
  E_{a,b}^c(z) = \sum_{j=0}^\infty \frac{(c)_j z^j}{j! \Gamma(aj+b)},
\end{equation*}
where $(c)_j = c(c-1)(c-2)\ldots(c-j+1)$ (also known as Pochhammer symbol) and $a>0, b>0, c>0, z \in \mathbb{C}$. This general form was introduced by \citet{Prabhakar_1971}. As special cases we have for $c=1$ the two-parameter Mittag-Leffler function $E_{a,b}$ and for $b=c=1$ the one-parameter Mittag-Leffler function $E_a$ \citep[see for example][]{Haubold_2011}. 

We will use the fractional Caputo-Djrbashian derivative which is defined as 
\begin{equation}
  \label{eq:3}
  D_t^\alpha f(t) = \frac{1}{\Gamma(1-\alpha)} \int_0^t \frac{\sd f(\tau)}{\sd \tau}\frac{\sd \tau}{(t-\tau)^\alpha}, \quad 0<\alpha<1.
\end{equation}
Its Laplace transform is
\begin{equation}
  \label{eq:4}
  \mathcal{L}\{D_t^\alpha f\}(s) = s^\alpha\mathcal{L}\{f\}(s) - s^{\alpha-1}f(0^+),
\end{equation}
Note that the Laplace transform of $h_\alpha(t,x)$, given in (\ref{eq:2}), is of the form
\begin{equation}
  \label{eq:18}
  \tilde{h}_\alpha(s,x) = s^{\alpha-1}\e^{-x s^\alpha},
\end{equation}
Equations (\ref{eq:3}), (\ref{eq:4}) and (\ref{eq:18}) can be found in \citet[p. 34]{Meerschaert_Sikorskii_2012}.

\cite{Beghin_Orsingher_2009, Beghin_Orsingher_2010} showed that the functions given in (\ref{eq:9}) satisfy the following fractional differential-difference equations:
\begin{equation}
  \label{eq:10}
  D_t^\alpha p_x^\alpha(t) = - \lambda(p_x^\alpha(t)-p_{x-1}^\alpha(t)), \quad x=0, 1, 2, \ldots
\end{equation}
with initial condition
\begin{equation*}
  p_x^\alpha (0) = \left\{ 
      \begin{array}{ll}
        1, &x=0\\
        0, &x\geq 1,
      \end{array}
\right.
\quad \text{and } p_{-1}^\alpha(t)\equiv 0.
\end{equation*}
In the next section we will prove a similar result using the FNPP that includes both the NPP and the FHPP as special cases. In particular, we look for a stochastic process whose marginal distributions give rise to a governing equation that generalizes both equations (\ref{eq:5}) and (\ref{eq:10}).\\
To this end, it is useful to consider the stochastic process $\{I(t,v), t\geq 0\}$ for $v\geq 0$ as
\begin{equation*}
  I(t,v) = N_1(\Lambda(t+v)) - N_1(\Lambda(v))
\end{equation*}
to which we will refer as the increment process of the NPP. The fractional increment process of the NPP is given by
\begin{equation}
  \label{eq:17}
  I_\alpha(t,v) := I(Y_\alpha(t), v) = N_1(\Lambda(Y_\alpha(t)+v)) - N_1(\Lambda(v)).
\end{equation}
and its marginals will be denoted as
\begin{align}
  f_x^\alpha(t,v) &:= \mathbb{P}\{N_1(\Lambda(Y_\alpha(t) + v)) - N_1(\Lambda(v)) = x\} , \quad x=0,1,2,\ldots\nonumber\\
  &= \int_0^\infty p_x(u,v)h_\alpha(t,u)\sd u \nonumber\\
  &= \int_0^\infty \frac{\e^{-\Lambda(v,u+v)} \Lambda(v,u+v)^x}{x!}h_\alpha(t,u) \sd u. \label{eq:15}
\end{align}
For the FNPP the marginal distributions are given by
\begin{align}
  f_x^\alpha (t,0) &= \mathbb{P}\{N_\alpha(t) = x\} = \int_0^\infty p_x(u)h_\alpha(t,u)\sd u\nonumber\\
  &= \int_0^\infty \frac{\e^{-\Lambda(u)}\Lambda(u)^x}{x!}h_\alpha(t,u) \sd u, \quad x=0,1,2,\ldots \label{eq:11}
\end{align}
For shorthand notation we write $f_x^\alpha(t) := f_x^\alpha(t,0)$.\\
Incidentally, this model includes Weibull's rate function:
\begin{equation*}
  \Lambda(t)=\left( \frac{t}{b}\right)^c, \lambda(t) = \frac{c}{b}\left( \frac{t}{b}\right)^{c-1}, c\geq 0, b > 0,
\end{equation*}
the Makeham's rate function
\begin{equation*}
  \Lambda(t) = \frac{c}{b}\e^{bt}-\frac{c}{b} + \mu t, \;\lambda(t) = c\e^{bt}+\mu, \quad c>0, b>0, \mu\geq 0
\end{equation*}
and many others.

\begin{remark}
Note that in general the NFPP does not belong to the class of Cox processes $\tilde{N}(t) = N_1(\tilde{\Lambda}(t))$, $t > 0$, where $\tilde{\Lambda}(t) = \int_0^t \tilde{\lambda}(u)$, $u\geq 0$, is a non-negative stochastic process, see \citet[p.169]{Daley_2003}.\\
In particular, the marginal distribution 
\begin{equation*}
  \mathbb{P}(\tilde{N}(t) = x) = \mathbb{E}\left\{ \frac{\xi([0,t])^x \e^{-\xi([0,t])}}{x!} \right\} = \int_0^\infty \frac{u^x}{x!} \e^{-u} F_A(\sd u), \quad x= 0,1, \ldots
\end{equation*}
is different from (\ref{eq:11}), where $F_A$ ($A=[0,1]$) is the distribution function for the random mass $\xi(A)$, for $\xi$ being a random measure on $[0,\infty)$.
\end{remark}

\section{Governing fractional differential-integral-difference equations}
\label{sec:govern-fract-diff}

We are now ready to derive the governing equation for the fractional increment process. This will lead us to derive a governing equation for the
marginal distribution \eqref{eq:11} of the FNPP.

\subsection{Governing equations}

\begin{theorem}
Let $I_\alpha(t,v)$ be the fractional increment process defined in (\ref{eq:17}). Then, its marginal distribution given in (\ref{eq:15}) satisfies the following fractional differential-integral equations
\begin{equation}
\label{eq:16}
  D_t^\alpha f^\alpha_x(t,v) = \int_0^\infty \lambda(u+v) [-p_x(u,v) + p_{x-1}(u,v)] h_\alpha(t,u)\sd u, \quad x= 0,1, \ldots,
\end{equation}
with initial condition
\begin{equation}
  f_x^\alpha(0,v) = \left\{
    \begin{array}{ll}
      1, &x=0,\\
      0, &x\geq 1
    \end{array}
\right.
\end{equation}
and $f_{-1}^\alpha (0,v) \equiv 0$, where $p_x(u,v)$ is given by (\ref{eq:14}) (with $p_{-1}(u,v)=0$) and $h(t,u)$ is given by (\ref{eq:2}).
\end{theorem}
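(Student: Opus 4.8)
The plan is to verify \eqref{eq:16} by passing to Laplace transforms in the time variable $t$ and invoking injectivity of the Laplace transform. Write $\bar p_x(\sigma,v) = \int_0^\infty \e^{-\sigma u} p_x(u,v)\,\sd u$ for the Laplace transform of the Poissonian marginals \eqref{eq:14} in their first argument, and recall from \eqref{eq:18} that $\tilde h_\alpha(s,u) = s^{\alpha-1}\e^{-us^\alpha}$. Interchanging the $t$-integration with the $u$-integration in \eqref{eq:15} gives
\[
  \tilde f_x^\alpha(s,v) = \int_0^\infty p_x(u,v)\,\tilde h_\alpha(s,u)\,\sd u = s^{\alpha-1}\bar p_x(s^\alpha,v),
\]
so that, by \eqref{eq:4}, the left-hand side of \eqref{eq:16} transforms to $s^\alpha \tilde f_x^\alpha(s,v) - s^{\alpha-1} f_x^\alpha(0^+,v) = s^{2\alpha-1}\bar p_x(s^\alpha,v) - s^{\alpha-1} f_x^\alpha(0^+,v)$.

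For the right-hand side, set $q_x(u,v) := \lambda(u+v)\bigl[-p_x(u,v)+p_{x-1}(u,v)\bigr]$, which by the differential-difference equation \eqref{eq:5} is precisely $\tfrac{\sd}{\sd u}p_x(u,v)$. Transforming \eqref{eq:5} in $u$ yields $\bar q_x(\sigma,v) = \sigma\bar p_x(\sigma,v) - p_x(0^+,v)$, and transforming the right-hand side of \eqref{eq:16} in $t$ (again interchanging the two integrals, using \eqref{eq:18}) gives $s^{\alpha-1}\bar q_x(s^\alpha,v) = s^{2\alpha-1}\bar p_x(s^\alpha,v) - s^{\alpha-1} p_x(0^+,v)$. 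Comparing the two transforms, \eqref{eq:16} follows once we check $f_x^\alpha(0^+,v) = p_x(0^+,v)$; this holds because $Y_\alpha(0)=0$ a.s., so $h_\alpha(t,\cdot)$ concentrates at the origin as $t\downarrow 0$ and \eqref{eq:15} forces $f_x^\alpha(0^+,v)=p_x(0^+,v)$, which by \eqref{eq:6} also gives the claimed initial condition and $f_{-1}^\alpha(0,v)\equiv 0$.

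I expect the one genuine technical point to be the justification of the two interchanges of integration — equivalently, of pulling $\mathcal{L}\{D_t^\alpha\,\cdot\}$ (or $D_t^\alpha$ itself) inside $\int_0^\infty(\cdot)\,h_\alpha(t,u)\,\sd u$ — which requires a domination bound uniform in the relevant variable, using the rapid decay of $h_\alpha(t,u)$ in $u$ and the boundedness of $p_x,p_{x-1}$ and local integrability of $\lambda$. A clean alternative that sidesteps Laplace transforms altogether is to use the governing relation $D_t^\alpha h_\alpha(t,u) = -\partial_u h_\alpha(t,u)$ for $u>0$ (itself a consequence of \eqref{eq:18} and \eqref{eq:4}), differentiate \eqref{eq:15} under the integral sign, and integrate by parts in $u$: the boundary terms vanish since $h_\alpha(t,u)\to 0$ as $u\to 0^+$ and as $u\to\infty$, and the surviving term $\int_0^\infty \partial_u p_x(u,v)\,h_\alpha(t,u)\,\sd u$ is, by \eqref{eq:5}, exactly the right-hand side of \eqref{eq:16}. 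Either way, once the interchange is licensed the remaining computation is routine, so the crux is the integrability bookkeeping together with the boundary/initial-value verification.
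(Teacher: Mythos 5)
Your main (Laplace-transform) argument is correct and is essentially the paper's proof: the authors also pass to the Laplace transform in $t$, use $\tilde h_\alpha(s,u)=s^{\alpha-1}\e^{-us^\alpha}$ together with \eqref{eq:4}, integrate by parts in $u$ to trade the exponential for the $u$-derivative of the Poisson marginals, and then invert. The only difference is that they first take the characteristic function in $x$, so that \eqref{eq:5} becomes the scalar ODE $\partial_u \hat p_y(u,v)=\lambda(u+v)(\e^{\ci y}-1)\hat p_y(u,v)$ and a single integration by parts handles all $x$ at once, whereas you work componentwise with $\bar q_x(\sigma,v)=\sigma\bar p_x(\sigma,v)-p_x(0^+,v)$; the two are interchangeable, and your identification $f_x^\alpha(0^+,v)=p_x(0^+,v)$ via $Y_\alpha(0)=0$ a.s.\ matches the paper's treatment of the initial condition. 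One caution about your proposed ``clean alternative'': the claim that the boundary term at $u=0$ vanishes because $h_\alpha(t,u)\to 0$ as $u\downarrow 0$ is false --- from \eqref{eq:2} and the tail behaviour of $g_\alpha$ one gets $h_\alpha(t,0^+)=t^{-\alpha}/\Gamma(1-\alpha)\neq 0$, so for $x=0$ (where $p_0(0,v)=1$) the integration by parts in $u$ produces a nonzero boundary contribution, and the interchange of $D_t^\alpha$ with $\int_0^\infty(\cdot)\,h_\alpha(t,u)\,\sd u$ is precisely where the compensating $t^{-\alpha}/\Gamma(1-\alpha)$ term must be accounted for. That route can be repaired, but as stated it would give the wrong equation for $x=0$; the Laplace-transform route you lead with avoids the issue and is the one to keep.
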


\begin{proof}
  The initial conditions are easily checked using the fact that $Y_\alpha(0) = 0 \text{ a.s}$ and it remains to prove (\ref{eq:16}). Let $f_x^\alpha$ be defined as in Equation~(\ref{eq:15}). Taking the characteristic function of $f_x^\alpha$ and the Laplace transform w.r.t. $t$ yields
  \begin{align*}
    \bar{f}^\alpha_y(r,v) &= \int_0^\infty \hat{p}_y(u,v) \tilde{h}_\alpha(r,u) \sd u\\
    &= \int_0^\infty\exp(\Lambda(v,u+v)(\e^{\ci y} -1))r^{\alpha-1}\e^{-ur^\alpha} \sd u.
  \end{align*}
Using integration by parts we get
\begin{align*}
  &\bar{f}^\alpha_y(r,v) = r^{\alpha-1} \left[-\frac{1}{r^\alpha} \right.\underbrace{\left.\vphantom{\frac{1}{r^\alpha}}\e^{-ur^\alpha}\exp(\Lambda(v,u+v)(\e^{\ci y} -1))\right\vert_{u=0}^\infty}_{=1}\\ 
    &\left. + \frac{1}{r^\alpha}\int_0^\infty \left( \frac{\sd}{\sd u} \Lambda(v,u+v)\right)(\e^{\ci y}-1)\exp(\Lambda(v, u+v)(\e^{\ci y} -1))\e^{-ur^\alpha} \sd u\right]\\
&= \frac{1}{r^\alpha}\left[ r^{\alpha-1} + (\e^{\ci y}-1)\int_0^\infty \lambda(u+v) \right.
\left.\vphantom{\int}\exp(\Lambda(v,u+v)(\e^{\ci y} -1))r^{\alpha-1}\e^{-ur^\alpha} \sd u\right].
\end{align*}
Now we are able to calculate the Caputo-Djrbashian derivative in Laplace space using Equation (\ref{eq:4}). Note that $\bar{f}^\alpha_y(0^+,v) = 1$ as $Y_\alpha(0) = 0$ a.s.
\begin{align*}
  r^\alpha \bar{f}^\alpha_y(r,v) &- r^{\alpha-1} \\
&= (\e^{\ci y}-1)\int_0^\infty\lambda(u+v)\exp(\Lambda(v,u+v)(\e^{\ci y} -1))r^{\alpha-1}\e^{-ur^\alpha} \sd u\\
&=(\e^{\ci y}-1)\int_0^\infty \lambda(u+v) \hat{p}_y(u,v) \tilde{h}_\alpha(r,u)\sd u.
\end{align*}
Inversion of the Laplace transform yields
\begin{equation*}
  D_t^\alpha \hat{f}^\alpha_y(t,v)=(\e^{\ci y}-1)\int_0^\infty \lambda(u+v) \hat{p}_y(u,v) h_\alpha(t,u)\sd u
\end{equation*}
and finally, by inverting the characteristic function, we obtain
\begin{equation}
  D_t^\alpha f^\alpha_x(t,v) = \int_0^\infty \lambda(u+v) [-p_x(u,v) + p_{x-1}(u,v)] h_\alpha(t,u)\sd u.
\end{equation}
which was to be shown.
\end{proof}

\begin{corollary}
\label{sec:govern-fract-diff-1}
  Let $N_\alpha(t)$, $t\geq 0$, $0<\alpha<1$ be a FNPP given by (\ref{eq:8}). Then, its marginal distributions shown in (\ref{eq:11}) satisfy the following fractional differential-integral equations:
  \begin{equation}
    \label{eq:12}
    D_t^\alpha f_x^\alpha(t) = \int_0^\infty \lambda(u) [-p_x(u) + p_{x-1}(u)] h_\alpha (t,u) \sd u,
  \end{equation}
with initial condition
\begin{equation}
  \label{eq:13}
  f_x^\alpha(0) = \left\{
    \begin{array}{ll}
      1, &x=0,\\
      0, &x\geq 1
    \end{array}
\right.
\end{equation}
and $f_{-1}^\alpha(0) \equiv 0$, where $p_x(u)$ is given by (\ref{eq:14}) and $h_\alpha (t,u)$ is given by (\ref{eq:2}).
\end{corollary}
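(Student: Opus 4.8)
The plan is to read off the corollary as the $v=0$ instance of the Theorem. First I would note that the FNPP is exactly the fractional increment process started at $v=0$: since $\Lambda(0)=\int_0^0\lambda(u)\,\sd u=0$ and $N_1(0)=0$ almost surely,
\[
  I_\alpha(t,0) = N_1(\Lambda(Y_\alpha(t)+0)) - N_1(\Lambda(0)) = N_1(\Lambda(Y_\alpha(t))) = N_\alpha(t).
\]
Consequently $f_x^\alpha(t,0)=\mathbb{P}\{N_\alpha(t)=x\}=f_x^\alpha(t)$, which is consistent with the shorthand introduced after (\ref{eq:11}); moreover $\Lambda(0,u)=\Lambda(u)$, so $p_x(u,0)=p_x(u)$ and $p_{x-1}(u,0)=p_{x-1}(u)$, and $\lambda(u+0)=\lambda(u)$.

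Then I would simply substitute $v=0$ into the governing equation (\ref{eq:16}): the left-hand side becomes $D_t^\alpha f_x^\alpha(t)$ and the right-hand side becomes $\int_0^\infty\lambda(u)[-p_x(u)+p_{x-1}(u)]h_\alpha(t,u)\,\sd u$, which is (\ref{eq:12}). The initial condition (\ref{eq:13}) follows in the same way from the initial condition of the Theorem, or directly: $Y_\alpha(0)=0$ a.s.\ gives $N_\alpha(0)=N_1(\Lambda(0))=0$ a.s., hence $f_0^\alpha(0)=1$ and $f_x^\alpha(0)=0$ for $x\geq 1$, with $f_{-1}^\alpha(0)\equiv 0$ by convention.

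Alternatively, one can repeat the proof of the Theorem verbatim in the simpler setting $v=0$: take the characteristic function of (\ref{eq:11}) and the Laplace transform in $t$ to get $\bar f_y^\alpha(r)=\int_0^\infty\exp(\Lambda(u)(\e^{\ci y}-1))\,r^{\alpha-1}\e^{-ur^\alpha}\,\sd u$, integrate by parts using $\tfrac{\sd}{\sd u}\Lambda(u)=\lambda(u)$, apply the Laplace-transform formula (\ref{eq:4}) for $D_t^\alpha$ together with $\bar f_y^\alpha(0^+)=1$, and invert both transforms. I do not anticipate a genuine obstacle here: the statement is a direct specialization, and the only point worth a sentence of justification is the identification $I_\alpha(t,0)=N_\alpha(t)$, which rests on $\Lambda(0)=0$.
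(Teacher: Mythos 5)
Your proposal is correct and matches the paper's proof, which simply states that the corollary follows from the Theorem with $v=0$; you merely spell out the identifications $I_\alpha(t,0)=N_\alpha(t)$, $p_x(u,0)=p_x(u)$, and $\Lambda(0)=0$ that make the specialization legitimate.
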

\begin{proof}
  This follows directly from Theorem \ref{sec:govern-fract-diff-1} with $v=0$.
\end{proof}
\subsection{Special cases}
It is useful to consider two special cases of the governing equations derived above, the FHPP and the NPP.
\begin{enumerate}

\item[(i)]
\label{sec:special-cases}
To get back to the FHPP we choose $\lambda(t) = \lambda >0$ as a constant to get
\begin{align}
  D_t^\alpha f_x^\alpha(t) &= \lambda \int_0^\infty [-p_x(u) + p_{x-1}(u)] h_\alpha (t,u) \sd u \nonumber\\
  &=-\lambda f_x^\alpha(t) + \lambda f_{x-1}^\alpha(t)
\end{align}
which is identical with (\ref{eq:10}). Indeed for constant $\lambda$ in (\ref{eq:11}) we get
\begin{equation*}
  f_x^\alpha (t) = \int_0^\infty \frac{\e^{-u\lambda}(\lambda u)^x}{x!} h_\alpha(t,u) \sd u = p_x^\alpha(t),
\end{equation*}
which means that $f_x^\alpha$ coincides with the marginal probabilities of the FHPP.\\

\item[(ii)]

To obtain the case of the NPP we consider $\alpha \to1$ for which we have $\tilde{h}_1(s,u) = \e^{-us}$ and its Laplace inversion is the delta distribution: $\mathcal{L}\{\tilde{h}\}(t,u) = \delta(t-u)$. By substituting this in Equation (\ref{eq:15}) we get
\begin{equation*}
  f_x^1(t,v) = \int_0^\infty p_x(u,v)\delta(t-u) \sd u = p_x(t,v),
\end{equation*}
which means that $f_x^1$ coincides with the marginal probabilities $p_x$ of the NPP.

Moreover, the proof of Theorem \ref{sec:govern-fract-diff-1} is still valid and by substituting the delta distribution in Equation (\ref{eq:16}) we get for $t \geq 0$
\begin{align}
  D_t^1p_x(t,v) = D_t^1f_x^1(t,v) & = & \nonumber \\ 
\int_0^\infty \lambda(u+v) [-p_x(u+v) + p_{x-1}(u,v)]\delta(t-u) \sd u = \nonumber \\
  \lambda(t+v) [-p_x(t,v) + p_{x-1}(t,v)] & &
\end{align}
which coincides with (\ref{eq:5}).

\end{enumerate}

\section{Moments and covariance structure}
\label{sec:moments-covar-struct}
As a further characterization of the FNPP, we now give the first moments of its distribution, namely the expectation, the variance and the covariance.

\subsection{Expectation}
\label{sec:expectation}

In order to compute the expectation, we use the tower property for the conditional expectation to get
\begin{align}
  \mathbb{E}[N(Y_\alpha(t))] &= \mathbb{E}[\mathbb{E}[N(Y_\alpha(t)) \vert Y_\alpha(t)]] = \int_0^\infty \mathbb{E}[N(x)] h_\alpha(t,x) \sd x \nonumber \\
&= \int_0^\infty \Lambda(x) h_\alpha(t,x) \sd x = \int_0^\infty \int_0^x \lambda(\tau)  h_\alpha(t,x) \sd \tau\sd x \nonumber \\
&=\mathbb{E}[\Lambda(Y_\alpha(t))].
\end{align}

\subsection{Variance}
\label{sec:second-moment-vari}
The variance can be computed by means of the law of total variance:
\begin{align}
  \text{Var}[N(Y_\alpha(t))] &= \mathbb{E}[\text{Var}[N(Y_\alpha(t)) \vert Y_\alpha(t)]] + \text{Var}[\mathbb{E}[N(Y_\alpha(t)) \vert Y_\alpha(t)]] \nonumber \\
 &= \mathbb{E}[\Lambda(Y_\alpha(t))] + \text{Var}[\Lambda(Y_\alpha(t))].
\end{align}
\subsection{Higher moments}
\label{sec:higher-moments}
For fixed $t >0$,  the moments of the Poisson distribution with rate $\Lambda(t)$ can be calculated via the derivatives of its characteristic function. However, the most explicit formula for higher moments of the Poisson distribution is given by
\begin{equation*}
  \mathbb{E}[[N(t)]^k] = \sum_{i=1}^k \Lambda(t)^i
\left\{
  \begin{array}{c}
    k\\
    i
  \end{array}
\right\},
\end{equation*}
where $\left\{
  \begin{array}{c}
    k\\
    i
  \end{array}
\right\}$ are the Stirling numbers of second kind:
\begin{equation*}
  \left\{
  \begin{array}{c}
    k\\
    i
  \end{array}
\right\}
= \frac{1}{i!} \sum_{j=0}^i (-1)^{i-j}
  \left(
  \begin{array}{c}
    i\\
    j
  \end{array}
\right)j^k.
\end{equation*}
Note that the second moment is given by 
\begin{equation*}
  \mathbb{E}[[N(t)]^2] = \Lambda(t) + \Lambda(t)^2, 
\end{equation*}
which we will use later for the calculation of the covariance.\\
Thus for the higher moments of the subordinated process we have
\begin{equation}
  \mathbb{E}[[N(Y_\alpha(t))]^k] = \mathbb{E}\left[ \sum_{i=1}^k \Lambda(Y_\alpha(t))^i
\left\{
  \begin{array}{c}
    k\\
    i
  \end{array}
\right\}\right].
 \end{equation}

\subsection{Covariance}
\label{sec:covariance}
Let $s, t \in \mathbb{R}_{+}$ and w.l.o.g. assume $s < t$. Then
\begin{align*}
  \mathbb{E}[N(s)N(t)] &= \mathbb{E}[N(t) - N(s)] \mathbb{E}[N(s)] + \mathbb{E}[N(s)^2]\\
  &= \Lambda(s,t)\Lambda(0,s) + \Lambda(0,s)^2 + \Lambda(0,s)
\end{align*}
and thus
\begin{align*}
  \text{Cov}(N(s), N(t)) &= \mathbb{E}[N(s)N(t)] - \mathbb{E}[N(s)]\mathbb{E}[N(t)]\\
  &= \Lambda(s,t)\Lambda(0,s) + \Lambda(0,s)^2 + \Lambda(0,s) - \Lambda(0,s)\Lambda(0,t)\\
  &= \Lambda(0,s)[\Lambda(s,t) + \underbrace{\Lambda(0,s) - \Lambda(0,t)}_{=-\Lambda(s,t)} + 1] = \Lambda(0,s).
\end{align*}
The same calculation can be done for the case $t < s$. In short, both cases can be summarized in the following way:
\begin{equation}
  \text{Cov}(N(s), N(t)) = \Lambda(0, s\wedge t).
\end{equation}
\begin{proposition}
By the law of total covariance, one finds:
\begin{align}
  \text{Cov}&[N(Y_\alpha(s)), N(Y_\alpha(t))] = \mathbb{E}[\text{Cov}[N(Y_\alpha(s)),N(Y_\alpha(t)) \vert Y_\alpha(s), Y_\alpha(t)]] \nonumber \\
  &+ \text{Cov}[\mathbb{E}[N(Y_\alpha(s))\vert Y_\alpha(s), Y_\alpha(t)], \mathbb{E}[N(Y_\alpha(t))\vert Y_\alpha(s), Y_\alpha(t)]] \nonumber \\
&=\mathbb{E}[\Lambda(0, Y_\alpha(s \wedge t))] + \text{Cov}[\Lambda(Y_\alpha(s)), \Lambda(Y_\alpha(t))]
\end{align}
\end{proposition}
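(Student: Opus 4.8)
The plan is to apply the law of total covariance conditioning on the pair $(Y_\alpha(s), Y_\alpha(t))$; this yields the first displayed equality immediately, and it then remains to evaluate the two resulting terms separately and identify their sum with the claimed expression.

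First I would treat the inner (conditional) covariance term. Since the HPP $N_1$ — and hence the NPP $N = N_1(\Lambda(\cdot))$ — is independent of the inverse stable subordinator $Y_\alpha$, conditioning on $(Y_\alpha(s), Y_\alpha(t))$ freezes the time arguments to deterministic values, so the conditional law of $\bigl(N(Y_\alpha(s)), N(Y_\alpha(t))\bigr)$ coincides with the law of $(N(a), N(b))$ evaluated at $a = Y_\alpha(s)$, $b = Y_\alpha(t)$. Invoking the covariance of the NPP already computed above, namely $\text{Cov}(N(a), N(b)) = \Lambda(0, a \wedge b)$, gives
\[
  \text{Cov}[N(Y_\alpha(s)), N(Y_\alpha(t)) \vert Y_\alpha(s), Y_\alpha(t)] = \Lambda(0, Y_\alpha(s) \wedge Y_\alpha(t)).
\]
Because $t \mapsto Y_\alpha(t)$ is almost surely non-decreasing (it is an inverse subordinator), one has $Y_\alpha(s) \wedge Y_\alpha(t) = Y_\alpha(s \wedge t)$ a.s., so taking expectations produces $\mathbb{E}[\Lambda(0, Y_\alpha(s \wedge t))]$ for the first term.

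For the second term, the same independence together with the Poissonian mean $\mathbb{E}[N(a)] = \Lambda(a)$ gives $\mathbb{E}[N(Y_\alpha(s)) \vert Y_\alpha(s), Y_\alpha(t)] = \Lambda(Y_\alpha(s))$ and, analogously, $\mathbb{E}[N(Y_\alpha(t)) \vert Y_\alpha(s), Y_\alpha(t)] = \Lambda(Y_\alpha(t))$; hence the outer covariance term equals $\text{Cov}[\Lambda(Y_\alpha(s)), \Lambda(Y_\alpha(t))]$. Adding the two contributions establishes the second equality.

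The main point requiring care — rather than a genuine difficulty — is the rigorous justification that conditioning on $(Y_\alpha(s), Y_\alpha(t))$ reduces the conditional joint distribution of the subordinated process to that of $N$ sampled at deterministic times; this rests on the independence of $N$ and $Y_\alpha$ and on the independent-increment structure of $N$ used in the earlier NPP covariance computation. One should also record mild integrability hypotheses on $\Lambda$ (for instance $\mathbb{E}[\Lambda(Y_\alpha(t))^2] < \infty$) ensuring that all the covariances involved are well defined, and note that the monotonicity identity $Y_\alpha(s) \wedge Y_\alpha(t) = Y_\alpha(s \wedge t)$ (and the monotonicity of $\Lambda$) is exactly what permits the compact form $\mathbb{E}[\Lambda(0, Y_\alpha(s \wedge t))]$.
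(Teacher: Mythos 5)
Your proposal is correct and follows essentially the same route as the paper: apply the law of total covariance conditioning on $(Y_\alpha(s),Y_\alpha(t))$, use the independence of $N$ and $Y_\alpha$ to reduce the conditional covariance and conditional means to the deterministic-time NPP formulas $\Lambda(0,a\wedge b)$ and $\Lambda(a)$, and invoke the monotonicity of $Y_\alpha$ to write $Y_\alpha(s)\wedge Y_\alpha(t)=Y_\alpha(s\wedge t)$. The only cosmetic difference is that the paper carries out the conditioning explicitly as double integrals against the joint density $p_{(Y_\alpha(s),Y_\alpha(t))}(x,y)$, whereas you argue the same reduction abstractly.
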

\begin{proof}
  For the first term, we have
  \begin{align*}
    \mathbb{E}&[\text{Cov}[N(Y_\alpha(s)),N(Y_\alpha(t)) \vert Y_\alpha(s), Y_\alpha(t)]] = \mathbb{E}[\mathbb{E}[N(Y_\alpha(s))N(Y_\alpha(t))] \vert Y_\alpha(s), Y_\alpha(t)]\\
    &- \mathbb{E}[N(Y_\alpha(s)) \vert Y_\alpha(s), Y_\alpha(t)]\mathbb{E}[N(Y_\alpha(t)) \vert Y_\alpha(s), Y_\alpha(t)]\\
    &= \int_0^\infty \int_0^\infty \mathbb{E}[N(x)N(y)] p_{(Y_\alpha(s), Y_\alpha(t))}(x,y) \;\sd x\; \sd y\\
    &- \int_0^\infty \int_0^\infty \mathbb{E}[N(x)] \mathbb{E}[N(y)] p_{(Y_\alpha(s), Y_\alpha(t))}(x,y) \;\sd x\; \sd y\\
    &= \int_0^\infty \int_0^\infty \text{Cov}[N(x), N(y)] p_{(Y_\alpha(s), Y_\alpha(t))}(x,y) \;\sd x \;\sd y\\
    &= \mathbb{E}[\Lambda(0, Y_\alpha(s) \wedge Y_\alpha(t))] = \mathbb{E}[\Lambda(Y_\alpha(s \wedge t))].
  \end{align*}
Note that in the last step we have used that $Y_\alpha$ is an increasing process.

For the second term:
\begin{align*}
  \text{Cov}&[\mathbb{E}[N(Y_\alpha(s))\vert Y_\alpha(s), Y_\alpha(t)], \mathbb{E}[N(Y_\alpha(t))\vert Y_\alpha(s), Y_\alpha(t)]]\\
  &=\mathbb{E}[\mathbb{E}[N(Y_\alpha(s)) \vert Y_\alpha(s), Y_\alpha(t)]\mathbb{E}[N(Y_\alpha(t)) \vert Y_\alpha(s), Y_\alpha(t)]]\\
 &- \mathbb{E}[\mathbb{E}[N(Y_\alpha(s)) \vert Y_\alpha(s), Y_\alpha(t)]] \mathbb{E}[\mathbb{E}[N(Y_\alpha(t)) \vert Y_\alpha(s), Y_\alpha(t)]]\\
  &=\int_0^\infty \int_0^\infty \mathbb{E}[N(x)] \mathbb{E}[N(y)] p_{(Y_\alpha(s), Y_\alpha(t))}(x,y) \;\sd x \;\sd y
  - \mathbb{E}[N(Y_\alpha(s))]\mathbb{E}[N(Y_\alpha(t))]\\
  &=\mathbb{E}[\Lambda(Y_\alpha(s))\Lambda(Y_\alpha(t))]- \mathbb{E}[\Lambda(Y_\alpha(s))]\mathbb{E}[\Lambda(Y_\alpha(t))]\\
  &=\text{Cov}[\Lambda(Y_\alpha(s)),\Lambda(Y_\alpha(t))],
\end{align*}
where $p_{(Y_\alpha (s), Y_\alpha (t))} (x,y)$ is the joint density of $Y_\alpha (s)$ and $Y_\alpha (t)$.
\end{proof}
\begin{remark}
The two-point cumulative distribution function of the inverse stable subordinator $Y_\alpha (t)$ can be computed using the fact that \citep[see][]{Leonenko_2013}
\begin{equation}
\mathbb{P}(Y_\alpha (s) > x, Y_\alpha (t) > y) = \int_{v=0}^t \frac{\alpha}{v} y h_\alpha (s,y) \int_{u=0}^{s-v} \frac{\alpha}{u} (x - y) h_\alpha (t,x-y) \, du dv.
\end{equation}

\end{remark}
\begin{remark}
For the homogeneous case $\Lambda(t) = \lambda t$, we get
\begin{equation*}
  \text{Cov}[N(Y_\alpha(s)), N(Y_\alpha(t))] = \lambda \mathbb{E}[Y_\alpha(s \wedge t)] + \lambda^2 \text{Cov}[Y_\alpha(s),Y_\alpha(t)],
\end{equation*}
which is consistent with the results in \cite{Leonenko_2014}.
\end{remark}

\section{Arrival times}
\label{sec:arrival-times}

Let $T_n = \min\{t \in [0,\infty): N_\alpha(t) =n\}$ be the epochs or event arrival times. Then, the following events coincide: $\{T_n \leq t\} = \{N_\alpha(t) \geq n\}$ and
\begin{align*}
  F_{T_n}(t) := \mathbb{P}(T_n \leq t) &= \mathbb{P}(N_\alpha(t) \geq n) = \sum_{x=n}^\infty f_x^\alpha(t)\\
  &= \sum_{x=n}^\infty \int_0^\infty \frac{\e^{-\Lambda(u)}\Lambda(u)^x}{x!} h_\alpha(t,u) \sd u\\
  &= \int_0^\infty h_\alpha(t,u)\sum_{x=n}^\infty \frac{\e^{-\Lambda(u)}\Lambda(u)^x}{x!}\sd u.
\end{align*}
In the last step we are allowed to use Fubini's theorem as the integrand is positive. Further, by integration by parts we get
\begin{align}
  \mathbb{P}&(T_n \leq t) = \left.\int_0^\infty h_\alpha(t,v) \sd v \left( \sum_{x=n}^\infty \frac{\e^{-\Lambda(u)} \Lambda(u)^x}{x!}\right)\right\vert_{u=0}^\infty \label{eq:20}\\
  &- \int_0^\infty \int_0^u h_\alpha(t,v) \sd v \left[ \sum_{x=n}^\infty \frac{-\Lambda'(u) \e^{-\Lambda(u)} \Lambda(u)^x}{x!} + \frac{-\Lambda'(u) \e^{-\Lambda(u)} \Lambda(u)^{(x-1)}}{(x-1)!}\right]\nonumber
\end{align}
As the power expansion of the exponential function is absolutely convergent, we are allowed to interchange limit and sum in (\ref{eq:20}). The limit is finite and we will denote it by $C$:
\begin{equation*}
  C:= \lim_{u\rightarrow \infty}\sum_{x=n}^\infty \frac{\e^{-\Lambda(u)} \Lambda(u)^x}{x!} < \infty.
\end{equation*}
We have $C=0$ if $\Lambda(u) \longrightarrow \infty$ for $u \longrightarrow \infty$. Thus, we obtain
\begin{align}
  \mathbb{P}(T_n \leq t) &= C + \int_0^\infty \left(\int_0^u h_\alpha(t,v) \sd v \right)\lambda(u) \e^{-\Lambda(u)} \left[ \sum_{x=n}^\infty \frac{\Lambda(u)^{x-1}}{(x-1)!} - \frac{\Lambda(u)^x}{x!}\right] \sd u\nonumber\\
  &= C + \int_0^\infty \left(\int_0^u h_\alpha(t,v) \sd v \right)\lambda(u) \e^{-\Lambda(u)}\frac{\Lambda(u)^{n-1}}{(n-1)!}\sd u.
\end{align}
As the power expansion of the exponential function is absolutely convergent, we are allowed to interchange limit and sum in (\ref{eq:20}). This result generalizes the Erlang distribution for the FNPP.

\section{Summary and outlook}

In this paper, we introduced a new stochastic process, the fractional non-homogeneous Poisson process (FNPP) as $N_\alpha (t) = N_1 (\Lambda (Y_\alpha (t)))$ where $N_1(t)$ is the homogeneous Poisson process with $\lambda =1$, $\Lambda (t)$ is the rate function and $Y_\alpha (t)$ is the inverse stable subordinator. This is a straightforward generalization of the non-homogeneous Poisson process (NHPP) $N_1 (\Lambda (t))$ and it reduces to the NHPP in the case $\alpha =1$. In Theorem 1, we have been able to derive a fractional governing equation for the process
$I_\alpha (t,v) = N_1(\Lambda(Y_\alpha(t)+v)) - N_1 (\Lambda (v))$. For $v=0$, this equation gives the fractional governing equation for the
marginal distributions $f_x^\alpha (t,0) = \mathbb{P}\{ N_\alpha (t) = x \}$. The calculations of moments for this process is a straightforward application of the rules for conditional expectations. Finally, it is possible to derive explicit expressions for the distribution of event arrival times.

As usual in these cases, this is not the only possible fractional non-homogeneous Poisson process. For instance, one could think of the process $N_1 (Y_\alpha (\Lambda (t)))$, where one begins from the fractional homogeneous Poisson process $\widetilde{N}_\alpha (t) = N_1 (Y_\alpha (t))$ and replaces the time with a rate function $\Lambda (t)$. The two processes $N_\alpha (t)$ and $\widetilde{N}_\alpha (t)$ do not coincide and they have different governing equations.

From a heuristic point of view, we expect that non-homogeneous fractional Poisson processes can be useful for modeling systems in which anomalous waiting times do not have stationary distributions. In these cases, it should be possible to use appropriate constructions such as those described in \cite{Gergely_1973} to derive the appropriate mesoscopic or macroscopic process from the microscopic stochastic dynamics. All this will be the subject of further research.

\section*{Acknowledgements}
\label{sec:acknowledgement}

N. Leonenko  was supported in particular by Cardiff Incoming Visiting Fellowship Scheme and International Collaboration Seedcorn Fund and Australian Research Council's Discovery Projects funding scheme (project number DP160101366). E. Scalas and M. Trinh were supported by the Strategic Development Fund of the University of Sussex.

  \bibliographystyle{elsarticle-harv} 
  \bibliography{fractionalRef}






\end{document}